\documentclass{amsart}
\usepackage{amsmath, amssymb, amsthm}
\usepackage{url}
\usepackage{tikz}
\usepackage[utf8]{inputenc}
\usepackage[hidelinks]{hyperref}
\usepackage{mathtools, graphicx}
\usepackage{pgfplots}
\usepackage{mathrsfs}
\usepackage{upgreek}
\usetikzlibrary{arrows}
\pgfplotsset{compat=1.15}
\usepackage{babel}
\usepackage{microtype}
\numberwithin{equation}{section}
\theoremstyle{plain}
\newtheorem{theorem}{Theorem}[section]
\newtheorem{lemma}[theorem]{Lemma}

\newtheorem{corollary}[theorem]{Corollary}
\newtheorem{proposition}[theorem]{Proposition}

\newtheorem*{observation}{Observation}

\usepackage{etoolbox}

\begin{document}
	\title[Local asymptotics and estimates of $m_D$- metric]{Local asymptotics near a smooth boundary point and estimates for a hyperbolic-type metric}
	\subjclass[2020]{Primary: 30F45, 30L15, 51K05; Secondary: 30C65, 30L10, 51M10} 
	\keywords{Quasihyperbolic metric, uniform domains, hyperbolic-type metrics.}
	\date{\today}
	
	\author[P. Naskar]{Pritam Naskar}
	\address{Pritam Naskar\\ Department of Mathematics \\
		Indian Institute of Technology Indore \\
		Simrol,  Indore,  Madhya Pradesh 453552, India.} 
	\email{naskar.pritam2000@gmail.com, phd2201241008@iiti.ac.in }

	\maketitle

	\begin{abstract}
	  In this paper, we investigate the local boundary behaviour of a recently developed hyperbolic-type metric $m_D$. First, employing a boundary-flattening technique and local behaviour of $m_D$-geodesics, we establish its asymptotic formula near any $C^1$-smooth boundary point. Next, we introduce a metric quantity analogous to the Nikolov--Andreev metric and show that $m_D$ is the inner metric associated with it. Finally, by establishing a sharp two-sided comparison inequality, we obtain an improved lower bound for the $m_D$-metric.
    \end{abstract}
	\section{\bf Introduction}\label{intro}
	Intrinsic geometry and hyperbolic-type metrics on proper subdomains $D\subset\mathbb{R}^n$, $n\geq 2$, is an important area in modern geometric function theory. The most fundamental object is the Poincar\'{e} hyperbolic metric model on the unit ball or on the upper half-space. To study in a general domain having no conformal invariance, Gehring and Palka \cite{GP} introduced the \emph{quasihyperbolic metric} $k_D$ as 
	\begin{equation}\label{eq:defn of kmetric}
		k_D(x,y) := \inf_{\gamma \in \Gamma_{xy}} \int_{\gamma} \frac{|dz|}{\delta_D(z)},
	\end{equation}
	where for a point $z\in D$ the Euclidean distance from $z$ to the boundary $\partial D$ of $D$ is defined by $\delta_D(z) := \inf\{|z-\xi| : \xi \in \partial D\}$ and $\Gamma_{xy}$ denotes the family of all rectifiable paths $\gamma$ joining $x$ to $y$ in $D$. Later, Gehring and Osgood \cite{GO} systematically developed many geometric and distortion properties of $k_D$ along with the characterization of \emph{uniform domains} in terms of metric inequality.

	Since path-integrated metrics such as $k_D$ are generally difficult to study explicitly, several closed-form metrics were introduced to approximate them. Gehring and Osgood first introduced distance ratio metrics as a companion of $k_D$ in \cite{GO}, which was later redefined by Vuorinen as
	\begin{equation}\label{eq:defn of j}
		j_D(x,y) := \log\left(1 + \frac{|x-y|}{\delta_D(x) \wedge \delta_D(y)}\right).
	\end{equation}
	Recently, in \cite{MaNaSa 1, MaNaSa 2}, this framework has been extended by introducing a generalized hyperbolic-type metric $m_D$, defined by 
	\begin{equation}
		m_{D}\left(x,y\right):=\inf_{\gamma\in \Gamma_{xy}}\int_{\gamma}\frac{d(D)}{\eta_D(z)}|dz|,
	\end{equation}
	and a distance ratio analogue companion metric 
	\begin{equation}\label{eq:defn of zeta}
		\zeta_D(x,y):=\log\left(1+\frac{d(D)\,|x-y|}{\min\{\eta_D(x), \eta_D(y)\}}\right),
	\end{equation}
	respectively, where $d(D)$ denotes the Euclidean diameter of the domain $D$, which is infinite if $D$ is unbounded and $\eta_D(z):=\delta_D(z)(d(D)-\delta_D(z))$. 
	
	In the unbounded case, the density $d(D)/\eta_D(z)$ naturally coincides with the quasihyperbolic density. It is worth mentioning that the metric $m_D$ coincides with the hyperbolic metric in balls and half-spaces, whereas it agrees with the quasihyperbolic metric in any unbounded domain $D$.
	
	In general, it is non-trivial to provide a compact formula for such a path-integrated metric. Our primary goal is to provide a compact formula for the $m_D$-metric near a $C^1$-smooth boundary point. The intuition behind this idea is described in Figure \ref{motivation_fig}, where we complete the diagram by the explicit quantity
	\begin{align}\label{eq:psi}
		\psi_D(x,y) := 2\sinh^{-1}\frac{\text{diam}(D)|x-y|}{2\sqrt{\eta_D(x)\eta_D(y)}},
	\end{align}
	and $s_{\mathbb{D}}$, $s_{\mathbb{H}}$ denotes the compact formula for the hyperbolic metric $h_{\mathbb{D}}$ in the unit disk and $h_{\mathbb{H}}$ in the upper half-plane, respectively (see Exercise 1.8(iii) and Theorem 1.2.6 of \cite{Ka}). Note that, the function $\psi_D$ simultaneously generalizes both $s_{\mathbb{D}}$ and $s_{\mathbb{H}}$.
		\vspace*{-1.5cm}
		\begin{figure}[h]
		\centering
		\begin{tikzpicture}[line cap=round,line join=round,>=triangle 45,x=0.7cm,y=0.7cm]
			\clip(-6.5,-5) rectangle (12,5);
			\draw [<->,line width=1pt] (-3.7,2) -- (2.70,2);
			\draw [<->,line width=1pt] (-3.7,-2) -- (2.70,-2);
			\draw (-4.55,2.44) node[anchor=north west] {$h_{\mathbb{D}}$};
			\draw (2.60,2.30) node[anchor=north west] {$m_D$};
			\draw (-4.55,-1.75) node[anchor=north west] {$s_{\mathbb{D}}$};
			\draw (2.65,-1.7) node[anchor=north west] {$?$};
			\draw (-2.2,2.7) node[anchor=north west] {$\textrm{General form}$};
			\draw [->,line width=1pt] (-4.2,1.7) -- (-4.2,-1.8);
			\draw [->,line width=1pt] (2.95,1.7) -- (2.95,-1.7);
			\draw (2.9,0.5) node[anchor=north west] {$\textrm{Compact}$};
			\draw (3.0,0.1) node[anchor=north west] {$\textrm{formula}$};
			\draw (-4.3,0.5) node[anchor=north west] {$\textrm{Compact}$};
			\draw (-4.2,0.1) node[anchor=north west] {$\textrm{formula}$};
			\draw (-2.2,-2.2) node[anchor=north west] {$\textrm{General form}$};
			\draw (9,2.44) node[anchor=north west] {$h_{\mathbb{H}}$};
			\draw [<->,line width=1pt] (3.7,2) -- (9.00,2);
			\draw [<->,line width=1pt] (3.2,-2) -- (9.0,-2);
			\draw (8.9,-1.75) node[anchor=north west] {$s_{\mathbb{H}}$};
			\draw [->,line width=1pt] (9.2,1.7) -- (9.2,-1.8);
			\draw (9.1,0.5) node[anchor=north west] {$\textrm{Compact}$};
			\draw (9.2,0.1) node[anchor=north west] {$\textrm{formula}$};
			\draw (4.5,-2.2) node[anchor=north west] {$\textrm{General form}$};
			\draw (4.5,2.7) node[anchor=north west] {$\textrm{General form}$};
		\end{tikzpicture}
		\vspace*{-1.75cm}
		\caption{Motivation.}
		\label{motivation_fig}
	\end{figure}
	
	Secondly, motivated by the Nikolov--Andreev metric, we introduce the following quantity 
	\begin{align}\label{eq:i*}
		i_D^*(x,y) :=2\log\frac{\eta_{D}(x)+\eta_{D}(y)+d(D)|x-y|}{2\sqrt{\eta_{D}(x)\eta_{D}(y)}}
	\end{align}
	and prove that it is a metric, even if we replace $\eta_D$ by any positive Lipschitz function. We establish a comparison inequality with $\zeta_D$ and prove that $i_D^*$ possesses $m_D$ as its inner metric. Finally, applying these results, we obtain an improved lower bound of $m_D$ by the metric $i_D^*$. We refer the reader to \cite{Moc25, NiAn, NiTh} for related works.
	\section{\bf Preliminary results}\label{preli}
This section contains all the auxiliary results needed to prove the main conclusion of the manuscript.
	\begin{lemma}\label{lemma: uniform domain and m-geod}
		If $D$ is a uniform domain, then there exists a constant $b$ such that 
		\begin{align*}
			\ell(\gamma(x,y))\leq b\,|x-&y|,\\
			\min\left\{\ell(\gamma(x,w)),\ell(\gamma(w,y))\right\}&\leq b\, \delta_D(w)
		\end{align*}
		for each $m_D$-geodesic $\gamma$ in $D$ and each triplet of points $x,w,y$ on $\gamma$.
	\end{lemma}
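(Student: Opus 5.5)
The plan is to reduce to the Gehring--Osgood theorem that quasihyperbolic geodesics in uniform domains are uniform (cigar) curves. We compare the $m_D$-density with the quasihyperbolic density. If $D$ is unbounded, $d(D)=\infty$ and $m_D=k_D$, so the $m_D$-geodesics are $k_D$-geodesics and the Gehring--Osgood result applies directly. If $D$ is bounded, then $\delta_D(z)\le d(D)/2$ for all $z\in D$, so $d(D)-\delta_D(z)\in[d(D)/2,\,d(D)]$. Hence
\begin{equation*}
\frac{1}{\delta_D(z)}\le \frac{d(D)}{\eta_D(z)}\le \frac{2}{\delta_D(z)},
\end{equation*}
which gives $k_D\le m_D\le 2k_D$ and, more importantly, the same pointwise bound on the integrands along any path.

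We then show that an $m_D$-geodesic $\gamma$ is a quasihyperbolic quasigeodesic. For any subarc $\gamma'$ of $\gamma$ with endpoints $u,v$,
\begin{equation*}
\ell_k(\gamma')\le \ell_m(\gamma')=m_D(u,v)\le 2k_D(u,v),
\end{equation*}
where $\ell_k$ and $\ell_m$ denote the $k_D$- and $m_D$-lengths. So $\gamma$ is a $2$-quasigeodesic for $k_D$ in the strongest, length-ratio sense, and this holds uniformly on all subarcs.

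Next we invoke the known stability result for uniform domains: a $c$-quasigeodesic of $k_D$ (in the sense $\ell_k(\gamma')\le c\,k_D(u,v)$ for all subarcs) in an $A$-uniform domain is a $b$-uniform arc, with $b=b(A,c,n)$. This is a theorem of Gehring--Osgood (their proof handles quasigeodesics) and also appears in Väisälä's work on free quasiworld and uniform domains. The result gives exactly the two displayed inequalities, the double cone condition $\min\{\ell(\gamma(x,w)),\ell(\gamma(w,y))\}\le b\,\delta_D(w)$ and the turning condition $\ell(\gamma(x,y))\le b|x-y|$.

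The main obstacle is this last step if one must prove it rather than cite it. In that case I would follow Gehring--Osgood directly. First, uniformity gives
\begin{equation*}
k_D(u,v)\le c_1\, j_D(u,v)+c_2.
\end{equation*}
Then, for a point $w$ on $\gamma$ with $\delta_D(w)$ small relative to $\ell(\gamma(x,w))$ and $\ell(\gamma(w,y))$, the $k_D$-length of $\gamma$ near $w$ is large, giving
\begin{equation*}
\ell_k \gtrsim \log\frac{\ell(\gamma(x,w))}{\delta_D(w)}.
\end{equation*}
Comparing this with $2k_D$ and the $j_D$ bound yields the cone condition. The length bound $\ell(\gamma)\le b|x-y|$ would follow by a similar comparison at the point of $\gamma$ farthest from $\partial D$, together with the standard estimate $k_D\ge \log(1+\ell/\delta)$-type bounds along the arc.
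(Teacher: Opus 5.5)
Your proposal is correct and follows essentially the paper's route. The paper substitutes the comparison $k_D\le m_D\le 2k_D$ for Gehring--Osgood's geodesic identity (their eq.~(2.18)) and reruns their argument; this is exactly your observation that every subarc of an $m_D$-geodesic satisfies $\ell_k(\gamma')\le 2k_D(u,v)$, so the quasigeodesic (neargeodesic) form of the Gehring--Osgood theorem applies. Your write-up is more explicit than the paper's, since it isolates the pointwise bound $1/\delta_D\le d(D)/\eta_D\le 2/\delta_D$ (via $\delta_D\le d(D)/2$) and the subarc property that the argument actually needs.
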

	\begin{proof}
		The proof follows from the direct use of the equivalence of the quasihyperbolic and the $m_D$-metric \cite[Theorem 3.5]{MaNaSa 1} in place of \cite[eq.~(2.18)]{GO} and the consequent discussion in the same paper of Gehring and Osgood.
	\end{proof}
	\begin{observation}\label{Observation}
		\normalfont A domain with a $C^1$-smooth boundary point is locally a uniform domain near that boundary point. Consequently, $m_D$-geodesics remain localized within a small neighbourhood of their endpoints near such a boundary point (cf.\ Lemma~\ref{lemma: uniform domain and m-geod}). More precisely, for a pair of points $x,y$ in a sufficiently small neighbourhood $\mathcal{U} \subset D$ near $u \in \partial D$, there exists a smaller neighbourhood $\mathcal{V}$ such that any $m_D$-geodesic connecting $x$ and $y$ does not leave $\mathcal{U}$.
	\end{observation}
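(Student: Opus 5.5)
The plan has three parts: build a small uniform subdomain at $u$, compare $m_D$-geodesics with $m_{D'}$-geodesics there, and then invoke Lemma~\ref{lemma: uniform domain and m-geod}.

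\textbf{Step 1: flatten the boundary.} Since $\partial D$ is $C^1$ at $u$, after a rotation and translation we may take $u=0$. Then for some $r_0>0$,
\[
D\cap B(0,r_0)=\{(x',x_n): x_n>\phi(x')\}\cap B(0,r_0),
\]
where $\phi$ is $C^1$, $\phi(0)=0$ and $\nabla\phi(0)=0$. Shrinking $r_0$, we may assume $|\nabla\phi|\le \varepsilon$ on the relevant disc, so $\phi$ is $\varepsilon$-Lipschitz. Set
\[
D'=\{(x',x_n): |x'|<r_0/2,\ \phi(x')<x_n<r_0/2\}.
\]
This is a Lipschitz graph domain over a disc, capped by a hyperplane. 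It is bi-Lipschitz equivalent to a cylinder via $(x',x_n)\mapsto (x',x_n-\phi(x'))$. Bi-Lipschitz images of uniform domains are uniform, so $D'$ is uniform. This proves the first sentence of the observation.

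\textbf{Step 2: compare $m_D$ and $m_{D'}$ near $u$.} For $z$ in a smaller ball $B(0,r_1)$ with $r_1\ll r_0$, we have $\delta_D(z)=\delta_{D'}(z)\le |z|$. Both densities $d/\eta$ are then comparable to $1/\delta$, by \cite[Theorem 3.5]{MaNaSa 1} or directly from the formula for $\eta$. We need that an $m_D$-geodesic $\gamma$ joining $x,y\in B(0,r_2)$ with $r_2\ll r_1$ stays in $B(0,r_1)$. Suppose instead that it exits. Its $m_D$-length is then at least the quasihyperbolic-type cost of travelling from $\delta\approx|x|$ out to distance $r_1$. This cost is at least
\[
c\left(\log\frac{r_1}{\delta_D(x)}+\log\frac{r_1}{\delta_D(y)}\right),
\]
using $k_D(z,w)\ge \log(\delta(w)/\delta(z))$-type lower bounds along the escaping path.

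On the other hand, $m_D(x,y)\le m_{D'}(x,y)$, since $D'\subset D$ and the density of $D'$ dominates that of $D$ near $u$ up to the bounded factor. The uniformity of $D'$ gives
\[
m_{D'}(x,y)\le C\,j_{D'}(x,y)+C=C\log\!\left(1+\frac{|x-y|}{\min\{\delta(x),\delta(y)\}}\right)+C.
\]
Since $|x-y|\le 2r_2$, this upper bound is much smaller than the escape cost once $r_2/r_1$ is small enough. The constants $C$ are controlled because they depend only on the uniformity constant of $D'$, which in turn depends only on $\varepsilon$.

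\textbf{Step 3: conclude.} By Step 2, $\gamma$ stays in the region where $\delta_D=\delta_{D'}$ and the densities agree up to the factor $d(D)/(d(D)-\delta)$. That factor tends to $1$ as $\delta\to 0$, and is identically $1$ in the unbounded case. So $\gamma$ is a quasi-geodesic for $m_{D'}$ inside a uniform domain. The Gehring--Osgood argument behind Lemma~\ref{lemma: uniform domain and m-geod} then gives
\[
\ell(\gamma)\le b\,|x-y|.
\]
Hence $\gamma\subset B(x,b|x-y|)$, and taking $\mathcal V=B(0,r_2)$ with $r_2$ small finishes the proof.

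\textbf{Main obstacle.} The hard part is Step 2's competition between the escape cost and the $j$-bound. The difficulty is keeping the constants independent of how close $x$ and $y$ are to the boundary. When $x$ and $y$ are both near $\partial D$, both sides blow up logarithmically, so I would rely on the factor-of-two gap: an escaping path must climb up and then back down. If that comparison is not clean, I would instead first apply Lemma~\ref{lemma: uniform domain and m-geod} to the $m_{D'}$-geodesic, and then use a cut-and-replace argument to show that any portion of $\gamma$ outside $B(0,r_1)$ can be shortened.
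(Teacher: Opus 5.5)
Your Step 2 carries the whole statement, and the comparison there fails. The paper itself only points to Lemma~\ref{lemma: uniform domain and m-geod}, which concerns globally uniform domains, so the localization is exactly what has to be proved.

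Test Step 2 with $\delta_D(x)=\delta_D(y)=t$ and $|x-y|=s$ fixed, and let $t\to 0$.
\emph{The escape side.} Since $d(D)/\eta_D\ge 1/\delta_D$ and $\delta_D(z)\le \delta_D(x)+|z-x|$, the escape cost is at least $2\log\bigl(1+(r_1-r_2)/t\bigr)$. The coefficient $2$ of $\log(1/t)$ cannot be improved: in a half-space, a path that rises along the normal to height $r_1$ and comes back realises it up to $O(1)$. In particular your unspecified $c$ is at most $1$.
\emph{The upper-bound side.} Any $C$ with $m_{D'}\le C\,j_{D'}+C$ must satisfy $C\ge 2$. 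Indeed $m_{D'}\ge k_{D'}$, and crossing the bisecting hyperplane of $[x,y]$ gives $k_{D'}(x,y)\ge 2\log(1+s/(2t))$, while $j_{D'}(x,y)=\log(1+s/t)$. Uniformity gives no reason for $C=2$ together with a $t$-independent additive term.
So for $C>2$,
\[
C\log\Bigl(1+\frac{s}{t}\Bigr)+C-2\log\Bigl(1+\frac{r_1-r_2}{t}\Bigr)=(C-2)\log\frac{1}{t}+O(1)\longrightarrow+\infty \qquad (t\to 0),
\]
and no contradiction arises. Shrinking $r_2/r_1$ only buys the additive amount $2\log(r_1/r_2)$, which cannot be uniform in $t$. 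The ``factor-of-two gap'' you invoke is not there. The competing $m_{D'}$-geodesic also has to climb from height $t$ and come back down, so both sides carry the same leading term $2\log(1/t)$, and the saving from not escaping is purely additive.

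What is missing is a way to cancel this common climbing cost. That is the cut-and-replace step you mention only as a fallback, and it has to be done at the scale $s=|x-y|$ using the cigar property, not via a global $j$-bound. Here is one way.
\begin{enumerate}
\item Suppose $\gamma$ leaves $B(0,r_1)$. Let $w$ be its first exit point and $w'$ its last re-entry point.
\item The arc $\gamma(x,w)$ is an $m_D$-geodesic lying where $\delta_D=\delta_{D'}$, so $m_D\le C\,j_D+C$ holds for all pairs of its points. The Gehring--Osgood argument behind Lemma~\ref{lemma: uniform domain and m-geod}, applied only to pairs of points of this arc, makes $\gamma(x,w)$ a $b$-cigar.
\item Take $r_1\ge 5r_2$. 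Let $z_1$ be the first point of $\gamma$ with $|z_1-x|=s$; it lies on $\gamma(x,w)$, and the cigar condition gives $\delta_D(z_1)\ge s/b$. Symmetrically, the last point $z_2$ with $|z_2-y|=s$ lies on $\gamma(w',y)$ and satisfies $\delta_D(z_2)\ge s/b$.
\item Hence $m_D(z_1,z_2)\le C\log(1+3b)+C$, a bound independent of $t$.
\item On the other hand, the subarc $\gamma(z_1,z_2)$ passes through $w$, and $\delta_D(z_i)\le r_2+s\le 3r_2$. So its $m_D$-length is at least $2\log\bigl(r_1/(3r_2)\bigr)$.
\item Taking $r_1/r_2$ large contradicts the fact that $\gamma(z_1,z_2)$ is a geodesic.
\end{enumerate}
Once $\gamma\subset B(0,r_1)$ is established this way, your Step 3 goes through.
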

	The proof of the following useful result is readily available in \cite[Lemma 3.3(iii)]{MaNaSa 2}.
	\begin{lemma}\label{lemma:eta is lips}
		Let $x,y$ be any two points in a bounded domain $D$ in
		$\mathbb{R}^{n}$, $n\geq 2$. Then the function $\eta_D$ is $d(D)$-Lipschitz.
	\end{lemma}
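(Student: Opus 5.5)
We need $|\eta_D(x)-\eta_D(y)|\le d(D)|x-y|$ for $x,y\in D$, where $\eta_D(z)=\delta_D(z)\bigl(d(D)-\delta_D(z)\bigr)$ and $D$ is bounded. The plan is to write $\eta_D=g\circ\delta_D$ with $g(t)=t(d(D)-t)$, and to combine the $1$-Lipschitz property of $\delta_D$ with a bound on the slope of $g$ over the range of $\delta_D$.

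First we record that $\delta_D$ is $1$-Lipschitz. For any $\xi\in\partial D$ the triangle inequality gives $\delta_D(x)\le |x-\xi|\le |x-y|+|y-\xi|$. Taking the infimum over $\xi$ yields $\delta_D(x)\le |x-y|+\delta_D(y)$, and by symmetry $|\delta_D(x)-\delta_D(y)|\le |x-y|$.

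Next we locate the range of $\delta_D$: it suffices that $0<\delta_D(z)\le d(D)/2$ for every $z\in D$. Fix $z$ and a unit vector $e$, and consider the two rays $z+te$ and $z-te$, $t\ge0$. Since $D$ is bounded, each ray leaves $D$, so there are $t_1,t_2\le\infty$ with $t_1,t_2\ge\delta_D(z)$ such that $z+t_1e,\ z-t_2e\in\partial D\subset\overline D$. Hence
\[
d(D)=d(\overline D)\ge t_1+t_2\ge 2\delta_D(z).
\]
Thus $\delta_D(D)\subset(0,d(D)/2]$.

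Finally we bound $g$ on this interval. For $s,t\in[0,d(D)/2]$ we have
\[
g(s)-g(t)=(s-t)\bigl(d(D)-s-t\bigr),
\]
and $0\le d(D)-s-t\le d(D)$. Therefore $|g(s)-g(t)|\le d(D)|s-t|$. Setting $s=\delta_D(x)$ and $t=\delta_D(y)$ and applying the first step gives
\[
|\eta_D(x)-\eta_D(y)|\le d(D)\,|\delta_D(x)-\delta_D(y)|\le d(D)\,|x-y|.
\]

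The only step needing care is the bound $\delta_D\le d(D)/2$, which ensures the factor $d(D)-s-t$ is nonnegative. In fact the nonnegativity is not strictly needed: since $0\le s+t\le d(D)$ already holds because each of $s,t$ is at most $d(D)$, we get $|d(D)-s-t|\le d(D)$ either way, so the estimate is robust.
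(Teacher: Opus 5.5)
Your proof is correct and complete. The paper gives no proof of its own and only refers to \cite[Lemma 3.3(iii)]{MaNaSa 2}. Your route is the natural self-contained one: you factor $\eta_D(x)-\eta_D(y)=(\delta_D(x)-\delta_D(y))(d(D)-\delta_D(x)-\delta_D(y))$, then combine the $1$-Lipschitz property of $\delta_D$ with a bound on the second factor.

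There is a slip in your closing remark. From $s,t\le d(D)$ you only get $0\le s+t\le 2d(D)$, not $s+t\le d(D)$. That still gives $|d(D)-s-t|\le d(D)$, so your point stands. Step 2 can indeed be replaced by the trivial bound $\delta_D\le d(D)$, which holds because $\partial D\subset\overline D$ and $d(\overline D)=d(D)$.

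Also, ``$t_1,t_2\le\infty$'' should read $t_1,t_2<\infty$.
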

	\begin{proposition}\label{prop:distortion-Lip}
		Let $f:\mathbb{R}^n\to \mathbb{R}^n$ be a Lipschitz mapping, that is, there exists a constant $L>0$ such that 
		$$
		|x-y|/L\leq |f(x)-f(y)|\leq L |x-y|.
		$$
		Then, for any proper subdomain $D\subset\mathbb{R}^n$, we have
		$$
        m_D(x,y)/L'\leq m_{f(D)}(f(x),f(y))\leq L' m_D(x,y),
        $$		
        where $L'=\min\{2L^2,L^3\}$.
	\end{proposition}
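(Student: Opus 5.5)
The plan is to compare the two integrands pointwise along corresponding paths and then take infima. The map $f$ is bi-Lipschitz, so $L\ge 1$ and $f$ is injective and continuous. By invariance of domain $f(\mathbb{R}^n)$ is open. Since $f$ is bi-Lipschitz its image is complete, hence closed. So $f$ is a homeomorphism of $\mathbb{R}^n$ onto itself. Consequently $f(D)$ is a proper subdomain with $\partial f(D)=f(\partial D)$, and rectifiable paths correspond bijectively under $f$ with $\ell(f\circ\gamma)\le L\,\ell(\gamma)$ (more precisely $|d f(z)|\le L|dz|$). It also follows that
\[
\delta_D(z)/L\le \delta_{f(D)}(f(z))\le L\,\delta_D(z),\qquad d(D)/L\le d(f(D))\le L\,d(D),
\]
and in particular $D$ is bounded if and only if $f(D)$ is. By symmetry (apply the same argument to $f^{-1}$, which is $L$-bi-Lipschitz from $f(D)$ to $D$), it suffices to prove the upper bound $m_{f(D)}(f(x),f(y))\le L'\,m_D(x,y)$.

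The key step is monotonicity of the density. Write
\[
g(\delta,d)=\frac{d}{\delta(d-\delta)}=\frac1\delta+\frac1{d-\delta},
\]
so the $m$-density at $z$ is $g(\delta_D(z),d(D))$; when $d=\infty$ it is $1/\delta$. Since always $\delta\le d/2$, the product $\eta=\delta(d-\delta)$ is increasing in $\delta$ on $(0,d/2]$, so $g$ is decreasing in $\delta$ there. For fixed $\delta$, $g$ is clearly decreasing in $d$. Put $\delta'=\delta_{f(D)}(f(z))$ and $d'=d(f(D))$. From $\delta/L\le\delta'\le d'/2$ and $d'\ge d/L>\delta/L$ we get
\[
g(\delta',d')\le g(\delta/L,d')\le g(\delta/L,d/L)=L\,g(\delta,d).
\]
The unbounded case is the same estimate with $1/\delta'\le L/\delta$.

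Integrating along $f\circ\gamma$ for an arbitrary $\gamma\in\Gamma_{xy}$ gives
\[
m_{f(D)}(f(x),f(y))\le\int_\gamma g(\delta',d')\,L\,|dz|\le L^2\int_\gamma \frac{d(D)}{\eta_D(z)}|dz|.
\]
Taking the infimum over $\gamma$ yields the constant $L^2$, which is at most $\min\{2L^2,L^3\}=L'$ because $L\ge1$.

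As a backup in case the monotonicity bookkeeping fails somewhere, the cruder bound $1/\delta\le g\le 2/\delta$ (again using $d-\delta\ge\delta$) together with $\delta'\ge\delta/L$ directly gives the constant $2L^2$.

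The main obstacle is justifying the chain of inequalities for $g$. The delicate point is applying the monotonicity in $\delta$ at the intermediate pair $(\delta/L,d')$: this requires $\delta/L\le\delta'\le d'/2$, which holds, but one must not try to bound $d'-\delta'$ directly from below, since that can fail.
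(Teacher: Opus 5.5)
Your proof is correct, and it takes a different route from the paper's, with a sharper constant. The paper gives two arguments. The first is a one-line direct comparison asserting the constant $L^3$, without displaying the pointwise density estimate behind it. The second imports the comparison $k_D\le m_D\le 2k_D$ from earlier work and combines it with the standard $L^2$ quasi-invariance of $k_D$ under bi-Lipschitz maps, which costs a factor $2$. The stated $L'=\min\{2L^2,L^3\}$ is simply the better of these two.

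You instead prove the pointwise distortion $d(f(D))/\eta_{f(D)}(f(z))\le L\,d(D)/\eta_D(z)$ directly. This uses two facts: $\delta\mapsto d/(\delta(d-\delta))$ is decreasing on $(0,d/2]$, and the same expression is decreasing in $d$ for fixed $\delta$. You apply them through the intermediate pair $(\delta_D(z)/L,\,d(f(D)))$. Combined with $|df(z)|\le L|dz|$ and symmetry under $f^{-1}$, this gives the constant $L^2$. That is at most $L'$ for every $L\ge 1$ and strictly smaller when $L>1$, so the minimum in $L'$ is unnecessary.

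Your argument is self-contained, since it needs neither the $m$--$k$ comparison nor the quasi-invariance of $k_D$. It also makes the direct route rigorous: you correctly note that bounding $d(f(D))-\delta_{f(D)}(f(z))$ from below term by term can fail, which is exactly what a naive version of the paper's one-line comparison would need. Your backup estimate $1/\delta\le d/\eta\le 2/\delta$ is the pointwise content of the paper's second route, and it recovers $2L^2$. Finally, you justify the topological facts the paper uses tacitly: that $f$ is onto, that $\partial f(D)=f(\partial D)$, and that $f$ preserves boundedness. The paper's advantage is only brevity, given the cited results.
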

	\begin{proof}
		By definition of the $m_D$-metric, we have
		\begin{align*}
			m_{D}\left(x,y\right)
			=\inf_{\gamma\in \Gamma_{xy}}\int_{\gamma}\frac{d(D)}{\eta_D(z)}|dz|
			\leq\inf_{\gamma\in \Gamma_{f(x)(y)}}\int_{\gamma}L^3\frac{d(f(D))}{\eta_{f(D)}(z)}|df(z)|
			=L^3m_{f(D)}\left(f(x),f(y)\right).
		\end{align*}
		Applying the same argument to $f^{-1}$ yields $m_{f(D)}\left(f(x),f(y)\right) \leq L^3 m_{D}\left(x,y\right)$. 
		
		Alternatively, we use the equivalence between $m_D$ and $k_D$ \cite[Theorem 4.4]{MaNaSa 1} together with the bi-Lipschitz property of $k_D$ \cite[Exercise 6.8]{HaKlVu} under $f$ to conclude that 
		$$
		\frac{m_D(x,y)}{2L^2}\leq m_{f(D)}(f(x),f(y))\leq 2L^2 m_D(x,y).
		$$
		Combining both bounds yields $L' = \min\{2L^2, L^3\}$.
	\end{proof}
	
	The following standard inequality will be needed in the main section.
	\begin{lemma}\label{lem:sinh inequality}
		For $w\geq 0$ and a constant $c\in \mathbb{R}_{\geq 0}$, we have
      \begin{align*}
	\sinh^{-1}(cw)
	\begin{cases}
		\geq c\sinh^{-1}(w), & \text{if } 0 \leq c \leq 1 \\
		\leq c\sinh^{-1}(w), & \text{if } c \geq 1.
	\end{cases}
      \end{align*}
	\end{lemma}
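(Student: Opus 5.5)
The plan is to treat the inequality as a statement about the function
\[
g(w) := c\sinh^{-1}(w) - \sinh^{-1}(cw), \qquad w \ge 0,
\]
for a fixed $c \ge 0$. We have $g(0) = 0$, so the claim reduces to a sign condition on $g'$. First dispose of the trivial cases. For $c = 0$ both sides vanish, and for $c = 1$ they coincide. So assume $c > 0$ and $c \neq 1$.

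Next, differentiate:
\[
g'(w) = \frac{c}{\sqrt{1+w^2}} - \frac{c}{\sqrt{1+c^2w^2}}
      = c\left(\frac{1}{\sqrt{1+w^2}} - \frac{1}{\sqrt{1+c^2w^2}}\right).
\]
If $0 < c < 1$, then $c^2w^2 \le w^2$, so $\sqrt{1+c^2w^2} \le \sqrt{1+w^2}$ and hence $g'(w) \le 0$ for all $w \ge 0$. Since $g(0) = 0$, it follows that $g(w) \le 0$, that is, $\sinh^{-1}(cw) \ge c\sinh^{-1}(w)$.

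If $c > 1$, the inequality between $c^2w^2$ and $w^2$ reverses. This gives $g' \ge 0$, hence $g \ge 0$, which is $\sinh^{-1}(cw) \le c\sinh^{-1}(w)$.

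An equivalent route, which I would mention as a remark, is concavity. The function $\sinh^{-1}$ is concave on $[0,\infty)$ because its derivative $(1+w^2)^{-1/2}$ is decreasing, and it vanishes at $0$. Hence $w \mapsto \sinh^{-1}(w)/w$ is nonincreasing on $(0,\infty)$. Comparing its values at $cw$ and $w$ gives $\sinh^{-1}(cw)/(cw) \ge \sinh^{-1}(w)/w$ when $c \le 1$, and the reverse when $c \ge 1$. This is exactly the statement.

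There is no real obstacle here. The only care needed is the boundary cases $c = 0$ and $w = 0$, where division by $w$ or $c$ is not allowed. This is why I prefer the derivative argument over the concavity one.
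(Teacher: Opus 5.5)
Your proposal is correct and matches the paper's approach: the paper gives no details beyond saying the lemma follows from basic calculus, and your monotonicity argument for $g(w)=c\sinh^{-1}(w)-\sinh^{-1}(cw)$, using $g(0)=0$ and the sign of $g'$, is exactly that verification. Your alternative concavity remark is also valid.
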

	The proof of the result can easily be derived using basic calculus.
\section{\bf Main results}\label{main}
\subsection{Estimate of $m_D$ near a smooth boundary point}
In this subsection, we prove that near a $C^1$-smooth boundary point, the metric $m_D$ agrees with $\psi_D$ in the limiting sense. That is, 
	\begin{theorem}\label{thm:estimate near bdry}
		Let $u$ be a $\mathcal{C}^1$-boundary point of a proper subdomain in $\mathbb{R}^n$, $n\geq 2$. Then we have
		\begin{align}
			\lim_{\substack{x,y\to u\in \partial D \\ x\neq y}}\frac{m_D(x,y)}{\psi_D(x,y)}=1.
		\end{align}
		In other words, near a $\mathcal{C}^1$-smooth boundary point, the metric $m_D$ is given by the compact formula $\psi_D$.
	\end{theorem}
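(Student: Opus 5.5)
We compare $m_D$ with the hyperbolic metric of a tangent half-space at $u$, up to a factor that tends to $1$ as $x,y\to u$. Fix $\varepsilon>0$.

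\textbf{Local model.} Since $u$ is $C^1$, after a rigid motion $u=0$, the inner normal is $e_n$, and near $0$ the boundary is the graph $x_n=\varphi(x')$ with $\varphi(0)=0$, $\nabla\varphi(0)=0$ and $\varphi\in C^1$. For $r$ small, $|\nabla\varphi|\le\varepsilon$ on $B(0,r)$. Consider the flattening map $F(x',x_n)=(x',x_n-\varphi(x'))$. Near $0$ it is bi-Lipschitz with constant $L=1+O(\varepsilon)$, and it sends $D\cap B(0,r)$ into the upper half-space $\mathbb{H}$.

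\textbf{Comparing the quantities.} For $z$ close to $0$, the $C^1$ condition gives $\delta_D(z)=(1+O(\varepsilon))\,\delta_{\mathbb H}(F(z))$. The nearest boundary point lies within $O(\delta_D(z))$ of $z$, and there the graph is almost flat.

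If $D$ is bounded, then $\eta_D(z)/d(D)=\delta_D(z)\bigl(1-\delta_D(z)/d(D)\bigr)=\delta_D(z)(1+o(1))$ as $z\to u$. If $D$ is unbounded, the density is exactly $1/\delta_D$. In both cases the $m_D$-density near $u$ is $(1+O(\varepsilon))/\delta_{\mathbb H}(F(z))$, which is the hyperbolic density of $\mathbb H$ pulled back by $F$.

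\textbf{Upper and lower bounds for $m_D$.} By Observation~\ref{Observation}, for $x,y$ in a smaller neighbourhood $\mathcal V$, every $m_D$-geodesic joining them stays in $\mathcal U\subset B(0,r)$. This gives the lower bound: integrate along the geodesic and push it forward by $F$, so that
\[
m_D(x,y)\ge (1-C\varepsilon)\,h_{\mathbb H}(F(x),F(y)).
\]

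For the upper bound, take the $h_{\mathbb H}$-geodesic between $F(x)$ and $F(y)$ and pull it back by $F^{-1}$. We must check that this pulled-back path stays in $F(\mathcal U)$. This holds because hyperbolic geodesics in $\mathbb H$ between points near $0$ remain within Euclidean distance $O(|F(x)-F(y)|+\max \delta)$ of the points, since they are semicircles. Hence
\[
m_D(x,y)\le(1+C\varepsilon)\,h_{\mathbb H}(F(x),F(y)).
\]

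\textbf{Passing to $\psi_D$.} By the compact formula,
\[
h_{\mathbb H}(a,b)=2\sinh^{-1}\frac{|a-b|}{2\sqrt{a_nb_n}}.
\]
Using $|F(x)-F(y)|=(1+O(\varepsilon))|x-y|$ and $F(x)_n=(1+O(\varepsilon))\,\eta_D(x)/d(D)$, the argument of $\sinh^{-1}$ here equals $(1+O(\varepsilon))$ times the argument in $\psi_D(x,y)$. Lemma~\ref{lem:sinh inequality} turns this multiplicative control of the arguments into
\[
(1-C\varepsilon)\,\psi_D\le h_{\mathbb H}(F(x),F(y))\le(1+C\varepsilon)\,\psi_D.
\]
Letting $\varepsilon\to0$ proves the theorem.

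\textbf{Main obstacle.} The delicate step is the uniform estimate $\delta_D(z)=(1+O(\varepsilon))\,F(z)_n$ near $u$. It requires showing that for $z$ close to $u$ the nearest boundary point lies on the local graph and not elsewhere on $\partial D$, and then using the gradient bound on $\varphi$ between $z'$ and the foot point. A related difficulty is the localization of geodesics. For the upper bound we only need the pulled-back half-space geodesic to stay in the chart, which the semicircle geometry provides. For the lower bound we rely on Observation~\ref{Observation}.
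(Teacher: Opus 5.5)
Your proposal is correct and is essentially the paper's argument. The graph-flattening map $F$ is the paper's $\uptheta$, with bi-Lipschitz constant tending to $1$; you then compare $\delta_D$ and the $m_D$-density with the half-space quantities, localize geodesics via the Observation in Section~\ref{preli}, and pass from the half-space compact formula to $\psi_D$ with Lemma~\ref{lem:sinh inequality}.

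The only difference is bookkeeping. You compare densities directly along pushed-forward or pulled-back geodesics, whereas the paper chains $m_D$, $m_{D^*}$, $m_{\uptheta(D^*)}$ and $m_{\mathbb{H}_1}$ through Proposition~\ref{prop:distortion-Lip} and domain monotonicity. Your semicircle argument also makes explicit the localization needed for the upper bound, which the paper leaves implicit.
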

	\begin{proof}
		The strategy of the proof is to flatten the boundary $\partial D$ locally near $u$, where the $\mathcal{C}^1$-smoothness of $\partial D$ is assumed. We begin the proof with some elementary transformations so that the point maps to the origin and there exists a neighbourhood $\mathcal{U}$ of the origin where the boundary can be written as an implicit function due to the smoothness assumption. That is, due to our assumptions and the $\mathcal{C}^1$-smoothness at the origin, there exists a $\mathcal{C}^1$ function $\uptau(x)$ in $\mathbb{R}^{n-1}$ such that 
		$$
		\partial D=\left\{(x_1,x_2,\dots,x_n): x_1=-\uptau(x_2,x_3,\dots,x_n)\right\}
		$$
		with $\uptau(0)=0$. Thus, we can write
		$$
		D^*:=D\cap\mathcal{U}=\left\{(x_1,x_2,\dots,x_n)\in\mathcal{U}: x_1+\uptau(x_2,x_3,\dots,x_n)>0\right\}.
		$$
		Here, one must note that the equation of the tangent plane is given by
		\begin{align*}
			x_1&=-\uptau(0)+\nabla (-\uptau)(0)\cdot x,\\
			\text{i.e., } x_1&=-\nabla (\uptau)(0)\cdot x,
		\end{align*}
		since $\uptau(0)=0$. Now, as per our assumption $x_1=0$ is the tangent plane at $0$, we must have $\nabla\uptau(0)=0$. Also, after the elementary operations, the normal vector at the origin must be $(1,0,\dots,0)$. As $\nabla(r(x))|_{x=0}=(1,0,\dots,0)$, where $r(x):=x_1+\uptau(x_2,x_3,\dots,x_n)$, represents the normal vector at the origin, we must have $\nabla\uptau(0)=0$. 
		
		We now define the boundary flattening map $\uptheta$ near the origin as
		$$
		\uptheta(x)=(r(x),x_2,\dots,x_n)
		$$
		and show that for any arbitrary constant $c>1$, we can shrink the neighbourhood $\mathcal{U}$ in which $\uptheta$ is a $c$-bi-Lipschitz function. Indeed, one can calculate 
		\begin{align*}
			\uptheta(x)-\uptheta(y)
			&=(r(x),x_2,\dots,x_n)-(r(y),y_2,\dots,y_n)\\
			&=(x_1-y_1,\cdots,x_n-y_n)+(\uptau(x_2,x_3,\dots,x_n)-\uptau(y_2,y_3,\dots,y_n),0,\cdots,0).
		\end{align*}
		Therefore, we have
		\begin{align*}
			|\uptheta(x)-\uptheta(y)|
			&\leq |x-y|+|\uptau(x_2,x_3,\dots,x_n)-\uptau(y_2,y_3,\dots,y_n)|\\
			&\leq |x-y|+\left(\sup_{z\in\mathcal{U}}|\nabla\uptau(z)|\right)|(x_2,x_3,\dots,x_n)-(y_2,y_3,\dots,y_n)|,
		\end{align*}
		by the mean value theorem. Since we have $\nabla\uptau(0)=0$ and $\mathcal{U}$ is a neighbourhood of the origin, if we shrink down $\mathcal{U}$, the continuity of  $\nabla\uptau$ gives that $\sup_{z\in\mathcal{U}}|\nabla\uptau(z)|$ gets closer to $0$. Thus, for arbitrary small $\varepsilon>0$, we have 
		$$
		\sup_{z\in\mathcal{U}}|\nabla\uptau(z)|\leq \varepsilon.
		$$
		Hence, we write
		\begin{align*}
			|\uptheta(x)-\uptheta(y)|\leq |x-y|&+\varepsilon |(x_2,x_3,\dots,x_n)-(y_2,y_3,\dots,y_n)|\leq (1+\varepsilon)|x-y|,\\
			\text{i.e., } &|\uptheta(x)-\uptheta(y)|\leq c|x-y|
		\end{align*}
		and similarly, $|\uptheta(x)-\uptheta(y)|\geq c^{-1}|x-y|$ with $1+\varepsilon\leq c$ and $1-\varepsilon\geq 1/c$.
		
		Now, we apply the fact that the metric $m_D$ matches the function $\psi_D$ when $D$ is a half-plane. We denote $\mathbb{H}_1:=\left\{x\in\mathbb{R}^n:x_1>0\right\}$ and estimate
		\begin{align*}
			m_{\mathbb{H}_1}(\uptheta(x),\uptheta(y))=2\sinh^{-1}\frac{|\uptheta(x)-\uptheta(y)|}{2\sqrt{\delta'_D(x)\delta'_D(y)}},
		\end{align*}
		where $\delta'_D(z)$ represent the distance from the boundary after the transformation by $\uptheta$ which satisfies the relation $c^{-1}\delta_D(z)\leq \delta'_D(z)\leq c \delta_D(z)$. Therefore, we obtain
		\begin{align*}
			m_{\mathbb{H}_1}(\uptheta(x),\uptheta(y))
			\leq2\sinh^{-1}\frac{c^2|x-y|}{2\sqrt{\delta_D(x)\delta_D(y)}}
			&=2\sinh^{-1}\left(\frac{c^2d(D)|x-y|}{2\sqrt{\eta_D(x)\eta_D(y)}}\sqrt{\left(1-\frac{\delta_D(x)}{d(D)}\right)\left(1-\frac{\delta_D(y)}{d(D)}\right)}\right)\\
			&\leq 2\sinh^{-1}\left(\frac{c^2d(D)|x-y|}{2\sqrt{\eta_D(x)\eta_D(y)}}\right)\\
			&\leq c^2 \psi_D(x,y),
		\end{align*}
		where the second last inequality follows from the facts that the last factor inside the square root is less than $1$ and that $\sinh^{-1}(w)$ is an increasing function, and Lemma \ref{lem:sinh inequality} provides the final inequality. 
		
		To prove the other side of the inequality, we omit a few similar steps and start with 
		\begin{align*}
			m_{\mathbb{H}_1}(\uptheta(x),\uptheta(y))
			\geq 2\sinh^{-1}\left(\frac{c^{-2}d(D)|x-y|}{2\sqrt{\eta_D(x)\eta_D(y)}}\sqrt{\left(1-\frac{\delta_D(x)}{d(D)}\right)\left(1-\frac{\delta_D(y)}{d(D)}\right)}\right).
		\end{align*}
		By observing that we are in a shrinking neighbourhood $\mathcal{U}$, one can choose $\varepsilon_1>0$ such that $\delta_D(z)/d(D)<\varepsilon_1$, for all $z\in \mathcal{U}$. Hence, the manipulations continue as
		\begin{align*}
			m_{\mathbb{H}_1}(\uptheta(x),\uptheta(y))
			&\geq 2\sinh^{-1}\left(c^{-2}(1-\varepsilon_1)\frac{d(D)|x-y|}{2\sqrt{\eta_D(x)\eta_D(y)}}\right)\\
			&\geq c^{-2}(1-\varepsilon_1)\psi_D(x,y),
		\end{align*}
		where the last inequality follows from Lemma \ref{lem:sinh inequality}. Hence, the final two-sided inequality obtained is 
		\begin{align}\label{two sided inequality of psi and mm in H}
			c^{-2}(1-\varepsilon_1)\psi_D(x,y)
			\leq m_{\mathbb{H}_1}(\uptheta(x),\uptheta(y))
			\leq c^2 \psi_D(x,y).
		\end{align}
		Now, for any point $z\in D^*$, we use the shrinking neighbourhood idea to see
		\begin{align*}
			\frac{d(D^*)}{\eta_{D^*}(z)}
			=\frac{1}{\delta_{D^*}(z)\left(1-\frac{\delta_{D^*}(z)}{d(D^*)}\right)}
			<\frac{1}{1-\varepsilon_1}\frac{1}{\delta_{D^*}(z)}
			\leq \frac{1}{1-\varepsilon_1}\frac{d(D)}{\eta_{D}(z)},
		\end{align*}
		which further implies that for any two points in the shrinking neighbourhood $x,y$, we have
		\begin{align}\label{eq:m inequality2}
			m_{D}(x,y)\leq m_{D^*}(x,y)\leq \frac{1}{1-\varepsilon_1} m_{D}(x,y).
		\end{align}
		Further, the use of Proposition \ref{prop:distortion-Lip} and equation \eqref{eq:m inequality2} gives
		\begin{align*}
			c^{-3}m_D(x,y)\leq c^{-3}m_{D^*}(x,y)\leq m_{\uptheta (D^*)}(\uptheta(x),\uptheta(y))\leq c^3m_{D^*}(x,y)\leq \frac{c^3}{1-\varepsilon_1}m_D(x,y).
		\end{align*}
		Let $\gamma$ be a geodesic joining $\uptheta(x)$ to $\uptheta(y)$ in $\mathbb{H}_1$. Then, using a similar argument as above, we calculate
		\begin{align*}
			m_{\uptheta (D^*)}(\uptheta(x),\uptheta(y))
			\leq \int_{\gamma}\frac{d(\uptheta (D^*))}{\eta_{\uptheta (D^*)}(z)}|dz|
			\leq \frac{1}{1-\varepsilon_2}m_{\mathbb{H}_1}(\uptheta(x),\uptheta(y)),
		\end{align*}  
		and it follows from the domain monotonicity property that 
		$$
		m_{\mathbb{H}_1}(\uptheta(x),\uptheta(y))\leq m_{\uptheta (D^*)}(\uptheta(x),\uptheta(y)).
		$$
		Hence, we obtain
		\begin{align*}
			c^{-3}m_{\mathbb{H}_1}(\uptheta(x),\uptheta(y))
			\leq m_D(x,y)\leq \frac{c^3}{1-\varepsilon_2}m_{\mathbb{H}_1}(\uptheta(x),\uptheta(y))
		\end{align*}
		and combining this with \eqref{two sided inequality of psi and mm in H} we get
		\begin{align*}
			\frac{c^{-5}}{1-\varepsilon_1}\leq\frac{m_D(x,y)}{\psi_D(x,y)}\leq \frac{c^{5}}{1-\varepsilon_2}.
		\end{align*}
		Finally, letting $\varepsilon_1,\varepsilon_2\to 0$ and $c\to 1^+$ completes the proof.
	\end{proof}
	The following result is an immediate consequence of Theorem \ref{thm:estimate near bdry}.
	\begin{corollary}
		For a bounded domain $D$ in $\mathbb{R}^n$ with $\mathcal{C}^1$-smooth boundary, we have 
		\begin{align}
			\lim_{\substack{x\to \partial D}}\frac{m_D(x,y)}{\psi_D(x,y)}=1 \text{ uniformly in } x\neq y.
		\end{align}
	\end{corollary}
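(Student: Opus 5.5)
The natural route is a compactness argument that upgrades the pointwise limit of Theorem~\ref{thm:estimate near bdry} to a uniform one. Since $D$ is bounded with $\mathcal{C}^1$-smooth boundary, $\partial D$ is compact and every $u\in\partial D$ is a $\mathcal{C}^1$-boundary point. We want: for every $\varepsilon>0$ there is $\rho>0$ such that $|m_D(x,y)/\psi_D(x,y)-1|<\varepsilon$ whenever $x\neq y$ lie in $D$ with $\delta_D(x),\delta_D(y)<\rho$ and are close to each other. The closeness restriction, say $|x-y|<\rho$, is forced: for points near two far-apart boundary points, $m_D$ and $\psi_D$ need not be asymptotic. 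So we read ``$x\to\partial D$'' as both points tending to the boundary together.

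\emph{Step 1: local constants.} The proof of Theorem~\ref{thm:estimate near bdry} actually gives more than a limit. For each $u\in\partial D$ and each $\varepsilon>0$ there is a neighbourhood $\mathcal{U}_u$ of $u$ with
\[
\frac{c^{-5}}{1-\varepsilon_1}\leq\frac{m_D(x,y)}{\psi_D(x,y)}\leq\frac{c^{5}}{1-\varepsilon_2}\qquad (x,y\in\mathcal{U}_u\cap D,\ x\neq y),
\]
where $c,\varepsilon_1,\varepsilon_2$ are chosen so that both bounds lie within $\varepsilon$ of $1$. Here $c$ is controlled by $\sup_{\mathcal{U}_u}|\nabla\uptau|$, and $\varepsilon_1,\varepsilon_2$ are controlled by $\operatorname{diam}\mathcal{U}_u/d(D)$. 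We shrink $\mathcal{U}_u$ further to the smaller neighbourhood $\mathcal{V}_u$ from the Observation, so that $m_D$-geodesics between points of $\mathcal{V}_u$ stay inside $\mathcal{U}_u$.

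\emph{Step 2: Lebesgue number.} Cover $\partial D$ by finitely many balls $B(u_i,r_i/2)$ with $B(u_i,r_i)\subset\mathcal{V}_{u_i}$, and set $\rho=\min_i r_i/4$. Take $x,y\in D$ with $\delta_D(x)<\rho$ and $|x-y|<\rho$. Pick $\xi\in\partial D$ with $|x-\xi|=\delta_D(x)$, and choose $i$ with $\xi\in B(u_i,r_i/2)$. Then
\[
|x-u_i|<\tfrac{r_i}{2}+\rho\le\tfrac34 r_i,\qquad |y-u_i|<\tfrac34 r_i+\rho\le r_i,
\]
so both points lie in $\mathcal{V}_{u_i}$. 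Step 1 then gives the ratio bound with constants independent of $i$, which is the required uniformity.

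\emph{Main obstacle.} The delicate point is Step 1: making sure the constants in the proof of Theorem~\ref{thm:estimate near bdry} depend only on the chosen neighbourhood, not on the particular sequence $x,y\to u$. In particular, the comparison $m_D\le m_{D^*}\le (1-\varepsilon_1)^{-1}m_D$ needs a quantitative version of the Observation (geodesic localization). For that I would use the uniformity constant $b$ of Lemma~\ref{lemma: uniform domain and m-geod}. A $\mathcal{C}^1$ bounded domain is uniform, so $b$ can be taken global, and every $m_D$-geodesic from $x$ to $y$ then stays within $b|x-y|$ of $x$. Taking $\rho$ also smaller than $\min_i r_i/(4b)$ keeps all relevant geodesics inside $\mathcal{U}_{u_i}$, and this closes the argument.
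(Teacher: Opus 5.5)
\textbf{There is a genuine gap: the restriction $|x-y|<\rho$, and the claim you use to justify it.}

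As written, only $x$ tends to $\partial D$, and the convergence must be uniform over all $y\neq x$. Theorem~\ref{thm:estimate near bdry} by itself does not reach pairs that stay apart, and the paper's one-line remark says nothing about that regime either. The statement is nevertheless true there, and your assertion that far-apart pairs ``need not be asymptotic'' is false. For $|x-y|\ge r$, both $m_D$ and $\psi_D$ equal $\log\frac{1}{\delta_D(x)}+\log\frac{1}{\delta_D(y)}$ up to a bounded error (and up to a factor $1+\varepsilon$ in the upper bound for $m_D$). Both blow up as $\delta_D(x)\to 0$.

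\emph{Bounds on $\psi_D$.} Write $\psi_D=2\sinh^{-1}A$ with $A=\frac{d(D)|x-y|}{2\sqrt{\eta_D(x)\eta_D(y)}}$. Using $\tfrac{d(D)}{2}\delta_D\le\eta_D\le d(D)\delta_D$ and $\log(2A)\le\sinh^{-1}A\le\log(2A+1)$, you get
\[
\log\frac{r^2}{\delta_D(x)\delta_D(y)}\le\psi_D(x,y)\le\log\frac{1}{\delta_D(x)\delta_D(y)}+C(d(D)).
\]

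\emph{Lower bound on $m_D$.} Use $m_D\ge k_D$. Cut any path at its first exit from $B(x,r/2)$ and its last entry into $B(y,r/2)$. Applying $k_D\ge j_D$ to the two disjoint end pieces gives $m_D(x,y)\ge\log\frac{r}{2\delta_D(x)}+\log\frac{r}{2\delta_D(y)}$.

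\emph{Upper bound on $m_D$.} $\mathcal{C}^1$-smoothness and compactness of $\partial D$ give $s_0>0$ with $\delta_D(\xi+s\nu_\xi)\ge(1-\varepsilon)s$ for all $\xi\in\partial D$ and $0<s\le s_0$, where $\nu_\xi$ is the inner normal. Climb from $x$ along the inner normal at a nearest boundary point to height $s_0$, and do the same from $y$ if $\delta_D(y)<s_0$. Then join the two tops inside the compact set $\{\delta_D\ge(1-\varepsilon)s_0\}$ at bounded $m_D$-cost. After renaming $\varepsilon$, this yields
\[
m_D(x,y)\le(1+\varepsilon)\Bigl(\log\frac{1}{\delta_D(x)}+\log\frac{1}{\delta_D(y)}\Bigr)+C_\varepsilon .
\]

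\emph{Conclusion.} Since $\psi_D(x,y)\ge\log\frac{1}{\delta_D(x)}+\log\frac{2r^2}{d(D)}\to\infty$ uniformly in such $y$, the ratio lies within $2\varepsilon$ of $1$ once $\delta_D(x)$ is small. Take $r=\rho$ and combine this with your near-diagonal case to get the corollary as stated. Without this step you have proved only the weaker ``both points near the same boundary point'' version.

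\textbf{On the near-diagonal part.} Your compactness argument there is correct, and it is the natural way to make the paper's ``immediate consequence'' precise. It is simpler than you make it. Step~1 is just the definition of the double limit in Theorem~\ref{thm:estimate near bdry}: for each $\varepsilon$ there is $r_u>0$ such that the ratio is within $\varepsilon$ of $1$ for all $x\neq y$ in $B(u,r_u)\cap D$. So there is no dependence on particular sequences to worry about. The global constant $b$ and the geodesic-localization discussion are not needed for the corollary; they belong to the proof of the theorem itself. The Lebesgue-number step alone supplies uniformity in $u$.
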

	
\subsection{A generalized metric as an improved lower bound of $m_D$}
In \cite{NiAn}, Nikolov--Andreev defined a metric $i_D$ in a domain $D$ of $\mathbb{R}^n$ as
\begin{equation}
	i_D(x,y) :=2\log\frac{\delta_{D}(x)+\delta_{D}(y)+|x-y|}{2\sqrt{\delta_{D}(x)\delta_{D}(y)}}
\end{equation}	
and used it to estimate the quasihyperbolic metric $k_D$ and connect it to the work of Dovgoshey et al.~\cite{DHV}. Later, this metric has been further investigated in many other works by different authors \cite{Moc25, LRZ, ZZH}. Now, we show that the quantity $i_D^*$ defined in \eqref{eq:i*}, generalizing $i_D$, is a metric.
    \begin{proposition}\label{prop:i is a metric}
    	The expression given by 
    	\begin{equation*}
    		i_D^*(x,y) :=2\log\frac{\eta_{D}(x)+\eta_{D}(y)+d(D)|x-y|}{2\sqrt{\eta_{D}(x)\eta_{D}(y)}}
    	\end{equation*}	
    	defines a metric in a domain $D$ in $\mathbb{R}^n$.
    \end{proposition}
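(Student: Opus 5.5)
Write $f:=\eta_D/d(D)$ and $L:=d(D)$. Then
\[
i_D^*(x,y)=2\log\frac{f(x)+f(y)+|x-y|}{2\sqrt{f(x)f(y)}},
\]
which is exactly the Nikolov--Andreev expression with $\delta_D$ replaced by $f$. By Lemma~\ref{lemma:eta is lips}, $f$ is positive and $1$-Lipschitz on $D$. (In the unbounded case we read $d(D)/\eta_D=1/\delta_D$, so $f=\delta_D$, which is also $1$-Lipschitz.) The plan is therefore to prove the general statement: for any positive $1$-Lipschitz $f$ on $D$, the quantity
\[
\rho(x,y)=2\log\frac{f(x)+f(y)+|x-y|}{2\sqrt{f(x)f(y)}}
\]
is a metric. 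As the introduction remarks, a general positive Lipschitz function can be rescaled to this normalisation.

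Symmetry is immediate. For nonnegativity, the AM--GM inequality gives $f(x)+f(y)\ge 2\sqrt{f(x)f(y)}$, so $\rho\ge 0$. If $\rho(x,y)=0$, both AM--GM and the distance term must be tight, so $|x-y|=0$. Conversely $\rho(x,x)=0$.

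For the triangle inequality, exponentiate and write $a=f(x)$, $b=f(y)$, $c=f(z)$. The claim becomes
\[
\frac{a+b+|x-y|}{2\sqrt{ab}}\le \frac{(a+c+|x-z|)(c+b+|z-y|)}{4c\sqrt{ab}},
\]
that is,
\[
2c\,(a+b+|x-y|)\le (a+c+|x-z|)(c+b+|z-y|).
\]
Put $p=|x-z|$ and $q=|z-y|$, so $|x-y|\le p+q$, and it suffices to show
\[
2c(a+b+p+q)\le (a+c+p)(b+c+q).
\]
The right side minus the left side equals
\[
(a+p-c)(b+q-c),
\]
by direct expansion: $(A+c)(B+c)-2c(A+B)=AB-c(A+B)+c^2=(A-c)(B-c)$ with $A=a+p$, $B=b+q$.

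The key step is then the Lipschitz property. Since $f$ is $1$-Lipschitz, $c\le a+|x-z|=A$ and $c\le b+q=B$. Hence both factors are nonnegative and the product is $\ge 0$, which proves the triangle inequality.

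The main obstacle is recognising this factorisation, and checking that only the $1$-Lipschitz normalisation, not $\eta_D$ specifically, is used. Lemma~\ref{lemma:eta is lips} supplies precisely this, with constant $d(D)$, which is why the factor $d(D)$ multiplies $|x-y|$ in $i_D^*$.
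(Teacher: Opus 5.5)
Your proof is correct and is the paper's argument. The paper also exponentiates, replaces $|x-y|$ by $|x-z|+|z-y|$, and reduces the triangle inequality to $\bigl(\eta_D(x)+d(D)|x-z|-\eta_D(z)\bigr)\bigl(\eta_D(y)+d(D)|z-y|-\eta_D(z)\bigr)\ge 0$ via Lemma \ref{lemma:eta is lips}, which is your factorisation $(A-c)(B-c)\ge 0$ after dividing by $d(D)$. Your normalisation $f=\eta_D/d(D)$ is cosmetic, and reading $f=\delta_D$ in the unbounded case is slightly more careful than the paper, since Lemma \ref{lemma:eta is lips} is stated only for bounded domains.
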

    \begin{proof}
    	We mainly show that vanishing metric value implies that the considered points are the same and that the triangle inequality holds, as symmetry and positivity are immediate. 
    	
    	Suppose for a pair of points $x,y\in D$, $i_D^*(x,y)=0$, which is equivalent of saying
    	\begin{align*}
    		\eta_{D}(x)+\eta_{D}(y)+d(D)|x-y|=2\sqrt{\eta_{D}(x)\eta_{D}(y)}\leq \eta_{D}(x)+\eta_{D}(y).
    	\end{align*}
    	Note that we have used the AM-GM inequality in the last step. Thus, we obtain
    	\begin{align*}
    		|x-y|\leq 0.
    	\end{align*}
    	The non-negativity of Euclidean distance implies $x=y$. 
    	
    	Now for any three points $x,y,z\in D$, the triangle inequality for the metric $i_D^*$ is equivalent to
    	\begin{align*}
    		\eta_{D}(x)+\eta_{D}(y)+d(D)|x-y|\leq \frac{1}{2\eta_{D}(z)}(\eta_{D}(x)+\eta_{D}(z)+&d(D)|x-z|)\\
    		&(\eta_{D}(z)+\eta_{D}(y)+d(D)|z-y|).
    	\end{align*}
    	From the Euclidean triangle inequality, it is sufficient to prove that 
    	\begin{align*}
    		2\eta_D(z)[\eta_{D}(x)+\eta_{D}(y)+d(D)|x-z|+d(D)|z-y|]\leq (\eta_{D}(x)+&\eta_{D}(z)+d(D)|x-z|)\\
    		&(\eta_{D}(z)+\eta_{D}(y)+d(D)|z-y|).
    	\end{align*}
    	Further simplifications gives
    	\begin{align*}
    		-(\eta_D(z)-\eta_D(x)-d(D)|x-z|)(\eta_D(z)-\eta_D(y)-d(D)|z-y|)\leq 0,
    	\end{align*}
    	which is always true due to Lemma \ref{lemma:eta is lips}.
    \end{proof}
    \begin{corollary}
    	Let $f:D\to \mathbb{R}$ be a positive $\alpha$-Lipschitz function on a domain $D$ in $\mathbb{R}^n$ with $\alpha>0$. Then the expression given by 
    	\begin{equation*}
    		i_{D,f}^*(x,y) :=2\log\frac{f(x)+f(y)+\alpha|x-y|}{2\sqrt{f(x)f(y)}}
    	\end{equation*}	
    	defines a metric.
    \end{corollary}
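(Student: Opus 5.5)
The plan is to repeat the proof of Proposition \ref{prop:i is a metric} almost verbatim, with $\eta_D$ replaced by $f$ and $d(D)$ replaced by $\alpha$. The only property of $\eta_D$ that proof used, beyond positivity, was Lemma \ref{lemma:eta is lips}. Here that property is supplied directly by the hypothesis $|f(x)-f(y)|\leq \alpha|x-y|$.

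\emph{Symmetry, nonnegativity and definiteness.} Symmetry is obvious from the formula. By AM--GM, $f(x)+f(y)\geq 2\sqrt{f(x)f(y)}$, so the argument of the logarithm is at least $1$ and $i_{D,f}^*\geq 0$. If $i_{D,f}^*(x,y)=0$, then
\begin{equation*}
f(x)+f(y)+\alpha|x-y|=2\sqrt{f(x)f(y)}\leq f(x)+f(y),
\end{equation*}
which forces $\alpha|x-y|\leq 0$. Since $\alpha>0$, this gives $x=y$.

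\emph{Triangle inequality.} Exponentiating, $i_{D,f}^*(x,y)\leq i_{D,f}^*(x,z)+i_{D,f}^*(z,y)$ is equivalent to
\begin{equation*}
2f(z)\bigl(f(x)+f(y)+\alpha|x-y|\bigr)\leq \bigl(f(x)+f(z)+\alpha|x-z|\bigr)\bigl(f(z)+f(y)+\alpha|z-y|\bigr).
\end{equation*}
Write $A=f(x)+\alpha|x-z|$ and $B=f(y)+\alpha|z-y|$. By the Euclidean triangle inequality it suffices to show $2f(z)(A+B)\leq (A+f(z))(B+f(z))$. Expanding, this is equivalent to
\begin{equation*}
(f(z)-A)(f(z)-B)\geq 0.
\end{equation*}
The $\alpha$-Lipschitz property gives $f(z)\leq f(x)+\alpha|x-z|=A$ and $f(z)\leq B$. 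Hence both factors are nonpositive, their product is nonnegative, and the inequality holds.

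The only step needing any care is the algebraic reduction to the factored form. It is routine, and it is precisely where the choice of the Lipschitz constant $\alpha$ as the coefficient of $|x-y|$ is essential, since each factor's sign comes from the Lipschitz bound with that same constant. With a coefficient $\beta\geq\alpha$ the argument still works, because $f$ is then also $\beta$-Lipschitz.
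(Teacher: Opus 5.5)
Your proposal is correct and matches the paper's approach: the paper's proof just says to repeat the argument of Proposition~\ref{prop:i is a metric} with $\eta_D$ replaced by $f$ and $d(D)$ by $\alpha$. You carry this out explicitly, reducing the triangle inequality to $(f(z)-A)(f(z)-B)\geq 0$ and using the $\alpha$-Lipschitz bound in place of Lemma~\ref{lemma:eta is lips}.
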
  
    \begin{proof}
    	The proof goes in the same way as the proof of Proposition \ref{prop:i is a metric}.
    \end{proof} 
    Now, we state our main result of this subsection and a consequence of it. The proof of Theorem \ref{thm:improved lower bound for mD} follows from the next two propositions proved afterwards.
      \begin{theorem}\label{thm:improved lower bound for mD}
    	Let $D$ be a bounded domain in $\mathbb{R}^n$. Then for any pair of points $x,y\in D$
    	\begin{equation*}
    		\zeta_D(x,y)\leq i_D^*(x,y)\leq m_D(x,y).
    	\end{equation*}
    \end{theorem}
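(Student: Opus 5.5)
We prove the two inequalities separately: the left one is an elementary algebraic comparison, and the right one is a length-space (inner metric) argument.

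\textbf{First inequality $\zeta_D\le i_D^*$.} Write $a=\eta_D(x)$, $b=\eta_D(y)$ and $t=d(D)|x-y|$, and assume without loss of generality that $a\le b$. Exponentiating both sides, the claim $\zeta_D(x,y)\le i_D^*(x,y)$ becomes
\begin{equation*}
1+\frac{t}{a}\le \frac{(a+b+t)^2}{4ab},
\qquad\text{i.e.}\qquad
4ab+4bt\le (a+b+t)^2 .
\end{equation*}
Expanding the right-hand side, this is equivalent to
\begin{equation*}
0\le (a-b)^2+t^2+2at-2bt=(b-a-t)^2 .
\end{equation*}
This holds trivially, and no further properties of $\eta_D$ are needed.

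\textbf{Second inequality $i_D^*\le m_D$.} The plan is to show that $m_D$ dominates $i_D^*$ because $m_D$ is a length metric whose infinitesimal form agrees with that of $i_D^*$. We use the following three steps.

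\emph{Step 1 (local comparison).} For $y\to x$, Taylor expand
\begin{equation*}
i_D^*(x,y)=2\log\Bigl(1+\frac{(\sqrt a-\sqrt b)^2+t}{2\sqrt{ab}}\Bigr).
\end{equation*}
By Lemma~\ref{lemma:eta is lips}, $\eta_D$ is $d(D)$-Lipschitz, so $(\sqrt a-\sqrt b)^2=O(|x-y|^2)$. Hence
\begin{equation*}
i_D^*(x,y)\le \frac{d(D)|x-y|}{\sqrt{\eta_D(x)\eta_D(y)}}+o(|x-y|).
\end{equation*}
Since $\eta_D$ is continuous and positive, this shows that the $i_D^*$-length of any rectifiable path $\gamma$ is at most $\int_\gamma d(D)\,|dz|/\eta_D(z)$.

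\emph{Step 2 (partition argument).} Fix a rectifiable path $\gamma$ from $x$ to $y$ and a partition $x=z_0,z_1,\dots,z_k=y$ along $\gamma$. By the triangle inequality for $i_D^*$ (Proposition~\ref{prop:i is a metric}),
\begin{equation*}
i_D^*(x,y)\le \sum_j i_D^*(z_{j-1},z_j).
\end{equation*}
By Step 1, each term is bounded by $d(D)\,\ell(\gamma_j)/\min_{\gamma_j}\eta_D$ plus an error that is $o$ of the mesh times $\ell(\gamma)$. Here $\gamma_j$ is the subarc from $z_{j-1}$ to $z_j$, and the bound uses $\sqrt{\eta_D(z_{j-1})\eta_D(z_j)}\ge\min_{\gamma_j}\eta_D$. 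Letting the mesh tend to zero turns the sum into a Riemann sum converging to $\int_\gamma d(D)\,|dz|/\eta_D(z)$. Taking the infimum over $\gamma$ gives $i_D^*\le m_D$.

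\emph{Step 3 (uniformity).} This is the main obstacle: the $o(|x-y|)$ term in Step 1 must be uniform along $\gamma$. Since $\gamma$ is compact in $D$, $\eta_D$ is bounded below by a positive constant on $\gamma$ and is uniformly continuous there. We therefore use the exact inequality $\log(1+s)\le s$, which gives
\begin{equation*}
i_D^*(z,w)\le \frac{(\sqrt{\eta_D(z)}-\sqrt{\eta_D(w)})^2+d(D)|z-w|}{\sqrt{\eta_D(z)\eta_D(w)}}.
\end{equation*}
The Lipschitz bound from Lemma~\ref{lemma:eta is lips} then shows the first term is at most $C|z-w|^2$, with $C$ uniform on $\gamma$. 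Summed over the partition, these quadratic terms total $O(\text{mesh}\cdot\ell(\gamma))\to0$. This avoids delicate asymptotics altogether. As a byproduct, the same computation in reverse shows that $m_D$ is the inner metric of $i_D^*$.
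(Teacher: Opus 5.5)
Your proof is correct and follows the paper's route. The first inequality is the same algebraic identity as in the paper's comparison of $\zeta_D$ with $i_D^*$, and you are right that no Lipschitz property is needed there. The second inequality is the length-space argument behind the paper's claim that $m_D$ is the inner metric of $i_D^*$. It rests on the same estimate $(\sqrt{\eta_D(z)}-\sqrt{\eta_D(w)})^2\le d(D)^2|z-w|^2/(\sqrt{\eta_D(z)}+\sqrt{\eta_D(w)})^2$ and the triangle inequality for $i_D^*$.

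Your uniform quadratic error bound via $\log(1+s)\le s$, together with the partition sum, makes explicit the step from the pointwise limit $i_D^*(x,y)/|x-y|\to d(D)/\eta_D(x)$ to $i_D^*\le m_D$, which the paper leaves implicit. Only this direction is needed, so the closing ``byproduct'' remark can be dropped; it would in any case require a separate lower estimate.
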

    \begin{corollary}
    	Let $D$ be an unbounded domain in $\mathbb{R}^n$. Then for any pair of points $x,y\in D$
    	\begin{equation*}
    		j_D(x,y)\leq i_D(x,y)\leq k_D(x,y).
    	\end{equation*}
    \end{corollary}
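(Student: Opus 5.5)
The plan is to prove the two inequalities directly, with $\delta_D$ in place of $\eta_D$ and $1$ in place of $d(D)$, following the scheme of Theorem~\ref{thm:improved lower bound for mD}. Theorem~\ref{thm:improved lower bound for mD} itself cannot be invoked, since it assumes $D$ is bounded. Formally, when $d(D)=\infty$ one has $\eta_D/d(D)=\delta_D(1-\delta_D/d(D))\to\delta_D$, so $\zeta_D$, $i_D^*$ and $m_D$ degenerate to $j_D$, $i_D$ and $k_D$. The corollary is exactly the unbounded analogue, and the direct argument below is what makes that analogy rigorous. Throughout, I write $a=\delta_D(x)$, $b=\delta_D(y)$, $t=|x-y|$, and assume $a\le b$ by symmetry.

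\emph{First inequality, $j_D\le i_D$.} After exponentiating, this becomes
\[
1+\frac{t}{a}\le \frac{(a+b+t)^2}{4ab}.
\]
Multiplying by $4ab$ reduces it to
\[
4ab+4bt\le (a+b)^2+2(a+b)t+t^2,
\]
which rearranges to $0\le (b-a)^2-2t(b-a)+t^2=(b-a-t)^2$. This is trivially true, so this step is pure algebra.

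\emph{Second inequality, $i_D\le k_D$.} This is the step requiring an argument. First, $i_D$ is a metric: this is the Corollary following Proposition~\ref{prop:i is a metric}, applied with $f=\delta_D$, which is positive and $1$-Lipschitz on $D$, and with $\alpha=1$. Next I will establish the infinitesimal estimate
\[
\limsup_{w\to z}\frac{i_D(z,w)}{|z-w|}\le\frac{1}{\delta_D(z)},
\]
locally uniformly in $z$. To see this, write $a=\delta_D(z)$ and $b=\delta_D(w)$. Then
\[
\frac{a+b+t}{2\sqrt{ab}}=1+\frac{(\sqrt a-\sqrt b)^2+t}{2\sqrt{ab}},
\]
and, since $|a-b|\le t$ by the Lipschitz property,
\[
(\sqrt a-\sqrt b)^2=\frac{(a-b)^2}{(\sqrt a+\sqrt b)^2}=O(t^2).
\]
Using $2\log(1+s)\le 2s$ then gives $i_D(z,w)\le t/\sqrt{ab}+O(t^2)$, and $\sqrt{ab}\to a$ as $w\to z$.

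With the infinitesimal estimate in hand, take any rectifiable path $\gamma\in\Gamma_{xy}$ and a partition $x=z_0,z_1,\dots,z_N=y$ along $\gamma$. The triangle inequality for $i_D$ gives
\[
i_D(x,y)\le\sum_k i_D(z_k,z_{k+1}).
\]
Because $\gamma$ is compact in $D$, $\delta_D$ is bounded below on $\gamma$ and uniformly continuous there. The infinitesimal estimate therefore bounds each term by $(1+\epsilon)\,\ell(\gamma|_{[z_k,z_{k+1}]})/\delta_D(z_k)$ once the mesh is fine enough. The resulting sum is a Riemann sum for $\int_\gamma|dz|/\delta_D(z)$. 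Letting the mesh tend to $0$, then $\epsilon\to0$, and finally taking the infimum over $\gamma$ yields $i_D\le k_D$.

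The main obstacle is the uniformity in this last step: one must control the $O(t^2)$ error and the ratio $\sqrt{ab}/a$ uniformly along $\gamma$. I expect this to follow from the positive lower bound of $\delta_D$ on the compact trace of $\gamma$.
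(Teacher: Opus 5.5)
Your proof is correct and follows the paper's route. The paper gets this corollary by specializing Theorem~\ref{thm:improved lower bound for mD} (the comparison proposition for $\zeta_D, i_D^*$ together with Proposition~\ref{prop:m_D is inner metric of i*}) to $d(D)=\infty$. You carry out exactly those steps with $\delta_D$ and $1$ in place of $\eta_D$ and $d(D)$, using the same square $(b-a-t)^2\ge 0$ and the same splitting of $\frac{a+b+t}{2\sqrt{ab}}$. What you add is rigor the paper's one-line appeal to unboundedness skips: you avoid the formal substitution $d(D)=\infty$, and you actually prove the inner-metric step $i_D\le k_D$ by partitions and Riemann sums (the uniformity you flag does follow from $\min_\gamma \delta_D>0$). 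Since no step of your argument uses unboundedness, it in fact gives $j_D\le i_D\le k_D$ in every proper subdomain.
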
 
    \begin{proof}
    	The proof is a direct consequence of the unboundedness of the domain $D$.
    \end{proof}
    \begin{proposition}\label{prop:comparison_i*&zeta}
    	Let $D$ be a bounded domain in $\mathbb{R}^n$. Then for any pair of points $x,y\in D$
    	\begin{equation*}
    		\zeta_D(x,y)\leq i_D^*(x,y)\leq 2\zeta_D(x,y).
    	\end{equation*}
    	Moreover, both inequalities are sharp.
    \end{proposition}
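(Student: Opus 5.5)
Both inequalities are exponentiated and reduced to elementary inequalities in a few normalized variables. Write $a=\eta_D(x)$, $b=\eta_D(y)$, $t=d(D)|x-y|$, and by symmetry assume $a\le b$, so $\min\{a,b\}=a$. Since $\log$ is increasing, the two inequalities of Proposition \ref{prop:comparison_i*&zeta} become
\begin{align*}
1+\frac{t}{a}\;\le\;\frac{(a+b+t)^2}{4ab}\;\le\;\Bigl(1+\frac{t}{a}\Bigr)^2 .
\end{align*}

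The right-hand inequality is immediate. It reads $(a+b+t)^2\le 4ab(a+t)^2/a^2=\frac{4b}{a}(a+t)^2$, i.e.\ $a+b+t\le 2\sqrt{b/a}\,(a+t)$. Setting $s=\sqrt{b/a}\ge 1$, this is $a(1+s^2)+t\le 2s(a+t)$. It does not hold for arbitrary $t$; it requires the Lipschitz bound of Lemma \ref{lemma:eta is lips}, $b-a\le t$. Then $a(s-1)^2=(b-a)-2a(s-1)$, and the needed inequality $a(s-1)^2\le (2s-1)t$ follows from
\begin{align*}
a(s-1)^2\le a(s^2-1)=b-a\le t\le (2s-1)t .
\end{align*}

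The left-hand inequality reads $4ab(a+t)\le a(a+b+t)^2$, i.e.\ $4b(a+t)\le (a+b+t)^2$. Writing $(a+b+t)^2-4b(a+t)=(a+t-b)^2\ge 0$, it holds trivially with no hypothesis.

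For sharpness, I expect the following choices to work.
\begin{itemize}
\item[(i)] For the lower bound, equality needs $b=a+t$. This is the equality case of the Lipschitz condition, so I would realize it approximately: take $D$ a ball, $x$ near the boundary, and $y$ moving radially toward the center, where $\eta_D$ changes at nearly maximal rate $d(D)$. Then $b-a\approx t$ and the ratio $i_D^*/\zeta_D\to 1$.
\item[(ii)] For the upper bound, equality needs $s=1$ ($a=b$) and $t$ large relative to $a$. Check: with $a=b$, $i^*=2\log(1+t/(2a))$ and $\zeta=\log(1+t/a)$, whose ratio tends to $2$ as $t/a\to\infty$. So take $x,y$ approaching the boundary of a ball at two fixed distinct boundary points with equal distances to the boundary.
\end{itemize}

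The main obstacle is the sharpness statement. One must show that these configurations realize the ratio limits $1$ and $2$, and in (i) verify that $b-a\approx t$ is attainable, for example with the ball's exact formula $\eta(z)=(r-|z|)(r+|z|)$ (radius $r$, $d=2r$).
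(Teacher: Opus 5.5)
Your proof of the two inequalities is correct and is essentially the paper's. The lower bound is the completed square $(a+b+t)^2-4b(a+t)=(a+t-b)^2\ge 0$; no Lipschitz hypothesis is needed, though the paper invokes one. The upper bound rests on the same Lipschitz step $(\sqrt b-\sqrt a)^2\le b-a\le t$. The paper uses that step to get $\frac{a+b+t}{2\sqrt{ab}}\le 1+\frac{t}{\sqrt{ab}}$ and then $\sqrt{ab}\ge a$, which is precisely the slack $t\le(2s-1)t$ in your chain. Your version is actually the cleaner one, since the equality displayed in the paper for this step does not hold as written; the correct identity is $\frac{a+b+t}{2\sqrt{ab}}=1+\frac{t+(\sqrt b-\sqrt a)^2}{2\sqrt{ab}}$. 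Your sharpness example for the upper bound (equal $\eta$, $t/a\to\infty$) is also the paper's: the pair $x,-x$ in $\mathbb{D}$ with $x\to 1$.

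The only place you differ, and the only part you leave open, is sharpness of the lower bound. Your configuration does work, and the check is short. Put $e=a+t-b$, which lies in $[0,t)$ when $a<b$. Then
\[
i_D^*-\zeta_D=\log\Bigl(1+\frac{e^2}{4b(a+t)}\Bigr),\qquad \zeta_D\ge\frac{t}{a+t},\qquad b>t-e,
\]
so $(i_D^*-\zeta_D)/\zeta_D\le e^2/(4bt)\le (e/t)^2/\bigl(4(1-e/t)\bigr)$, which tends to $0$ whenever $e/t\to 0$. Take a ball of radius $r$, with both points on one radius at boundary distances $\delta_1<\delta_2\to 0$. Then $e=\delta_2^2-\delta_1^2$ and $t=2r(\delta_2-\delta_1)$, so $e/t=(\delta_1+\delta_2)/(2r)\to 0$.

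The paper's route is simpler. It uses the same equal-$\eta$ family you already computed, now with $u=t/a\to 0$ (the pair $x,-x$ with $x\to 0$), where $2\log(1+u/2)/\log(1+u)\to 1$. Ratio-sharpness does not need near-equality $b\approx a+t$. It already occurs infinitesimally, because $\zeta_D$ and $i_D^*$ have the same local density $d(D)/\eta_D$. What your example buys in return is more information. With $\delta_2=\lambda\delta_1$ ($\lambda>1$), the ratio tends to $1$ while $\zeta_D\to\log\lambda$. So the constant $1$ is approached at every scale, not only as $y\to x$.
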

    \begin{proof}
    	Let $x,y\in D$ be two arbitrary points in $D$. Without loss of generality we assume that $\eta_D(x)\leq \eta_D(y)$. The $d(D)$-Lipschitz property of the function $\eta_D$ implies that
    	\begin{align*}
    		(\eta_D(x)-\eta_D(y)+d(D)|x-y|)^2\geq 0.
    	\end{align*}
    	Expansion of the square term and a small rearrangement gives
    	\begin{align*}
    		(\eta_D(x)+\eta_D(y)+d(D)|x-y|)^2\geq& 4\eta_D(x)\eta_D(y)+4\eta_D(x)d(D)|x-y|,\\
    		\text{i.e., } \frac{(\eta_D(x)+\eta_D(y)+d(D)|x-y|)^2}{4\eta_D(y)}&\geq \eta_D(x)+d(D)|x-y|.
    	\end{align*}
    	Finally, we write the above expression as
    	\begin{align*}
    		\frac{(\eta_D(x)+\eta_D(y)+d(D)|x-y|)^2}{4\eta_D(x)\eta_D(y)}\geq\frac{\eta_D(x)+d(D)|x-y|}{\eta_D(x)},
    	\end{align*}
    	and taking logarithm on both the sides establishes the first inequality. 
    	
    	To see the second inequality, we begin with
    	\begin{align*}
    	\frac{(\eta_{D}(x)+\eta_{D}(y)+d(D)|x-y|)^2}{4\eta_{D}(x)\eta_{D}(y)}
    		=&1+\frac{d(D)|x-y|}{2\sqrt{\eta_D(x)\eta_D(y)}}+\left(\frac{d(D)-(\eta_D(y)-\eta_D(x))}{2\eta_D(x)\eta_D(y)}\right)^2\\
    		\leq& \left(1+\frac{d(D)|x-y|}{\sqrt{\eta_D(x)\eta_D(y)}}\right)^2,
    	\end{align*}
    	and taking the logarithm on both sides, the second inequality is obtained. 
    	
    	To see that the inequalities are sharp, consider the pair of diametrically opposite points $x$ and $-x$ on the real diameter of the unit disk $\mathbb{D}$. A direct computation shows that 
    	\begin{align*}
    		\lim_{x\to 0}\frac{i_D^*(x,-x)}{\zeta_{\mathbb{D}}(x,-x)}=1 \quad \text{and}\quad \lim_{x\to 1}\frac{i_D^*(x,-x)}{\zeta_{\mathbb{D}}(x,-x)}=2.
    	\end{align*}
    	Hence, both the constant in the ineualities are best possible.
    \end{proof}
    \begin{corollary}
    	Let $D$ be an unbounded domain in $\mathbb{R}^n$. Then for any pair of points $x,y\in D$
    	\begin{equation*}
    		j_D(x,y)\leq i_D(x,y)\leq 2j_D(x,y).
    	\end{equation*}
    \end{corollary}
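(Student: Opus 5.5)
The corollary is the unbounded case of Proposition \ref{prop:comparison_i*&zeta}, so I will derive it by specializing that proposition rather than by repeating its computation. The starting point is the convention stated in the introduction: for unbounded $D$ we have $d(D)=\infty$, and the density $d(D)/\eta_D(z)$ reduces to the quasihyperbolic density $1/\delta_D(z)$.

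\textbf{Step 1: limits of the generalized metrics.} I will make the specialization precise. Since $\eta_D(z)=\delta_D(z)(d(D)-\delta_D(z))$, we get
\[
\frac{d(D)}{\eta_D(z)}=\frac{1}{\delta_D(z)\bigl(1-\delta_D(z)/d(D)\bigr)}\to\frac{1}{\delta_D(z)}
\]
as $d(D)\to\infty$. Substituting into \eqref{eq:defn of zeta} shows that $\zeta_D$ becomes $j_D$ as in \eqref{eq:defn of j}. Dividing numerator and denominator of \eqref{eq:i*} by $d(D)$ shows that $i_D^*$ becomes $i_D$. So with the convention for unbounded domains, $\zeta_D=j_D$ and $i_D^*=i_D$.

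\textbf{Step 2: the two inequalities directly.} Because the proposition's proof uses $d(D)$ only through the $d(D)$-Lipschitz property of $\eta_D$ (Lemma \ref{lemma:eta is lips}), I will check the two inequalities directly with $\delta_D$ in place of $\eta_D$ and $1$ in place of $d(D)$. Here $\delta_D$ is $1$-Lipschitz. Assume $\delta_D(x)\le\delta_D(y)$.

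For the first inequality, I expand
\[
(\delta_D(x)-\delta_D(y)+|x-y|)^2\ge 0
\]
and rearrange, exactly as in the proposition, to get
\[
(\delta_D(x)+\delta_D(y)+|x-y|)^2\ge 4\delta_D(y)\bigl(\delta_D(x)+|x-y|\bigr).
\]
Dividing by $4\delta_D(x)\delta_D(y)$ and taking logarithms gives $j_D\le i_D$.

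For the second inequality, write $a=\delta_D(x)$, $b=\delta_D(y)$, $t=|x-y|$. I need
\[
\frac{a+b+t}{2\sqrt{ab}}\le 1+\frac{t}{a}.
\]
Using $a+b\le 2a+t$ (the Lipschitz bound), the left side is at most $(2a+2t)/(2\sqrt{ab})=(a+t)/\sqrt{ab}$. Using $\sqrt{ab}\ge a$, this is at most $(a+t)/a=1+t/a$. Squaring and taking logarithms gives $i_D\le 2j_D$.

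\textbf{Expected obstacle.} The one point needing care is justifying that the passage $d(D)=\infty$ is legitimate, rather than a formal manipulation of infinite quantities. Step 2 handles this by proving the inequalities for $j_D$ and $i_D$ directly, so the corollary does not rest on a limiting argument. Sharpness is not asserted in the corollary, so nothing further is needed. If a sharpness check were wanted, the half-plane with points approaching each other, or moving far apart relative to the boundary distance, would be the natural test.
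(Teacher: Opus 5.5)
Your proposal is correct and follows the paper's route: the paper's entire proof is to specialize the proposition comparing $\zeta_D$ and $i_D^*$ to the case $d(D)=\infty$, where these become $j_D$ and $i_D$. Your Step~2 reruns that proposition's argument with $(\delta_D,1)$ in place of $(\eta_D,d(D))$, using only the $1$-Lipschitz property of $\delta_D$ and the clean chain $a+b+t\le 2(a+t)$, $\sqrt{ab}\ge a$ for the upper bound; this makes rigorous the paper's formal substitution $d(D)=\infty$ into a result stated only for bounded domains, and it also shows that both inequalities hold in every proper subdomain, bounded or not.
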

    \begin{proof}
    	The unboundedness of the domain $D$ yields the result.
    \end{proof}
    \begin{proposition}\label{prop:m_D is inner metric of i*}
    	For any two pair of points $x,y\in D$, we have 
    	\begin{equation*}
    		\begin{aligned}
    			2\log\left(1 + \frac{d(D)|x-y|}{2\sqrt{\eta_D(x)\eta_D(y)}}\right) \leq i_D^*(x,y) \leq& 2\log\left(1 + \frac{d(D)|x-y|}{2\sqrt{\eta_D(x)\eta_D(y)}} \right. \\
    			&\left. + \frac{d(D)^2|x-y|^2}{2\sqrt{\eta_D(x)\eta_D(y)}(\sqrt{\eta_D(x)}+\sqrt{\eta_D(y)})^2}\right).
    		\end{aligned}
    	\end{equation*}
    	In particular, the following limit holds true: 
    	\begin{equation*}
    		\lim_{y\to x}\frac{i_D^*(x,y)}{|x-y|}=\frac{d(D)}{\eta_{D}(x)},
    	\end{equation*}
    	and this shows that the metric $m_D$ acts as an inner metric of $i_D^*$.
    \end{proposition}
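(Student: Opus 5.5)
Write $a=\eta_D(x)$, $b=\eta_D(y)$ and $t=d(D)|x-y|$. The whole proposition rests on one algebraic identity. Since $a+b=2\sqrt{ab}+(\sqrt a-\sqrt b)^2$, we have
\begin{equation*}
\frac{a+b+t}{2\sqrt{ab}}=1+\frac{t}{2\sqrt{ab}}+\frac{(\sqrt a-\sqrt b)^2}{2\sqrt{ab}} .
\end{equation*}
The last term is nonnegative. Dropping it and applying $2\log$ gives the lower bound at once.

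For the upper bound, I will rewrite
\begin{equation*}
(\sqrt a-\sqrt b)^2=\frac{(a-b)^2}{(\sqrt a+\sqrt b)^2}
\end{equation*}
and invoke Lemma~\ref{lemma:eta is lips}. It gives $|a-b|\le d(D)|x-y|=t$, hence $(\sqrt a-\sqrt b)^2\le t^2/(\sqrt a+\sqrt b)^2$. Substituting this into the identity yields exactly the stated right-hand side. Both bounds then follow from the monotonicity of $\log$.

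For the limit, fix $x$ and let $y\to x$. By Lemma~\ref{lemma:eta is lips}, $\eta_D$ is continuous, so $\sqrt{ab}\to\eta_D(x)$ and $(\sqrt a+\sqrt b)^2\to 4\eta_D(x)$. Set $u=t/(2\sqrt{ab})$. The lower bound equals $2\log(1+u)$, and the upper bound equals $2\log(1+u+O(|x-y|^2))$. Since $\log(1+s)=s+O(s^2)$, after dividing by $|x-y|$ both bounds tend to
\begin{equation*}
2\cdot\frac{d(D)}{2\eta_D(x)}=\frac{d(D)}{\eta_D(x)} .
\end{equation*}
The squeeze gives the limit. Two features of this step will matter below. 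First, the two bounds depend only on $a$, $b$ and $|x-y|$. Second, the convergence is uniform for $x$ ranging over any compact $K\subset D$, because $\eta_D$ is bounded below by a positive constant on $K$.

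The inner-metric statement is the step I expect to be the main obstacle. The claim to prove is that for every rectifiable path $\gamma\subset D$,
\begin{equation*}
\ell_{i_D^*}(\gamma)=\int_\gamma \frac{d(D)}{\eta_D(z)}\,|dz| .
\end{equation*}
Taking the infimum over paths then identifies the inner metric of $i_D^*$ with $m_D$; in particular $i_D^*\le m_D$, which is needed for Theorem~\ref{thm:improved lower bound for mD}. I plan to proceed as follows.
\begin{enumerate}
\item Parametrize $\gamma$ by Euclidean arc length and take partitions $z_0,\dots,z_N$ of mesh tending to $0$.
\item The image of $\gamma$ is compact, so by the uniformity noted above,
\begin{equation*}
i_D^*(z_{k-1},z_k)=\Bigl(\frac{d(D)}{\eta_D(z_{k-1})}+o(1)\Bigr)|z_k-z_{k-1}| ,
\end{equation*}
with the $o(1)$ uniform in $k$.
\item Summing over $k$, the partition sums are Riemann--Stieltjes sums of the continuous function $d(D)/\eta_D$ against arc length, up to an error of $o(1)\,\ell(\gamma)$.
\item The $i_D^*$-length is the supremum of these sums. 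By the triangle inequality in Proposition~\ref{prop:i is a metric}, that supremum equals their limit under refinement, which is the integral.
\end{enumerate}
The delicate point is handling $\sum_k|z_k-z_{k-1}|$ against arc length, but this is the standard argument for continuous densities in the style of Gehring--Osgood.
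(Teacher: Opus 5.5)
Your proposal is correct and follows the paper's proof. Both use the identity $\frac{a+b+t}{2\sqrt{ab}}=1+\frac{t}{2\sqrt{ab}}+\frac{(\sqrt a-\sqrt b)^2}{2\sqrt{ab}}$, drop the square for the lower bound, use the $d(D)$-Lipschitz bound on $\eta_D$ for the upper bound, and squeeze for the limit. The paper simply asserts the inner-metric conclusion from the local density, whereas you sketch the standard partition/Riemann-sum argument (local uniformity on compacta plus the triangle inequality) that justifies it. The one step you leave implicit is that your lower bound, together with $\eta_D\le d(D)^2/4$, gives $i_D^*(z,w)\ge C|z-w|$; hence paths that are not Euclidean-rectifiable have infinite $i_D^*$-length and can be ignored in the infimum.
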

    \begin{proof}
    	We start the proof by estimating the argument inside the logarithm in the definition of $i_D^*$ as follows:
    	\begin{align}\label{eq:main eq in 2sided inequality of i*}
    		\frac{\eta_{D}(x)+\eta_{D}(y)+d(D)|x-y|}{2\sqrt{\eta_{D}(x)\eta_{D}(y)}}
    		=&\frac{\left(\sqrt{\eta_D(x)}-\sqrt{\eta_D(y)}\right)^2+2\sqrt{\eta_D(x)\eta_D(y)}+d(D)|x-y|}{2\sqrt{\eta_D(x)\eta_D(y)}}\nonumber \\ 
    		=&1+\frac{d(D)|x-y|}{2\sqrt{\eta_D(x)\eta_D(y)}}+\frac{\left(\sqrt{\eta_D(x)}-\sqrt{\eta_D(y)}\right)^2}{2\sqrt{\eta_D(x)\eta_D(y)}}
    	\end{align}
Dropping the non-negative squared term yields the lower bound.
    	
    	For the second inequality, we use
    	\begin{align}\label{eq:inequality for eta form Lip property}
    		\sqrt{\eta_D(x)}-\sqrt{\eta_D(y)}\leq \frac{d(D)|x-y|}{\sqrt{\eta_D(x)}+\sqrt{\eta_D(y)}},
    	\end{align}
    	which is a consequence of the fact that the function $\eta_D$ is $d(D)$-Lipschitz. Estimating the last term in \eqref{eq:main eq in 2sided inequality of i*} using \eqref{eq:inequality for eta form Lip property}, we deduce the result.
    	
    	Taking the limit as $y \to x$ after dividing by $|x-y|$, the sandwich rule of limits yields the local density to be $d(D)/\eta_D(x)$, establishing that $m_D$ is the inner metric of $i_D^*$.
    \end{proof}
    
    \subsection*{Concluding remarks} In \cite[Proposition 5.5]{MaNaSa 2}, the authors showed that the metric $\zeta_D$ has $m_D$ as its inner metric. Moreover, in Proposition \ref{prop:m_D is inner metric of i*}, we showed that the metric $m_D$ is the inner metric of $i_D^*$. Hence, two distinct metrics can have the same metric as their inner metric.
    
	\section*{\bf Acknowledgements}
The author is financially supported by the University Grants Commission (UGC) with Ref. No.: 221610091493.

\end{document}